\documentclass[12pt]{amsart}
\usepackage{latexsym}
\usepackage{amsthm}
\usepackage{amsmath}
\usepackage{amsfonts}
\usepackage{amssymb}
\usepackage{amsrefs}
\usepackage[dvips]{graphicx}
\usepackage{xypic}
\usepackage{xspace}
\thispagestyle{plain}

\input xy
\xyoption{all}

\newtheorem{theo}{Theorem}[section]

\newtheorem{propo}[theo]{Proposition}

\newtheorem{coro}[theo]{Corollary}
\newtheorem{rem}[theo]{Remark}

\theoremstyle{definition}
\newtheorem{defn}[theo]{Defintion}
\newtheorem{notn}[theo]{Notation}

\newcommand\Str{\operatorname{\bf Str}}
\newcommand\Pure{\operatorname{\bf Pure}}
\newcommand\Mono{\operatorname{\bf Mono}}

\newcommand\op{\operatorname{op}}
\newcommand\id{\operatorname{id}}
\newcommand\Id{\operatorname{Id}}

\newcommand\Set{\operatorname{\bf Set}}
\newcommand\Pres{\operatorname{\bf Pres}}
\newcommand\Pos{\operatorname{\bf Pos}}
\newcommand\Ab{\operatorname{\bf Ab}}
\newcommand\Red{\operatorname{\bf Red}}

\newcommand\Mod{\operatorname{\bf Mod}}

\newcommand\ca{\mathcal {A}}
\newcommand\cb{\mathcal {B}}
\newcommand\cc{\mathcal {C}}

\newcommand\cf{\mathcal {F}}
\newcommand\cu{\mathcal {U}}

\newcommand\ck{\mathcal {K}}
\newcommand\cl{\mathcal {L}}

\newcommand\cp{\mathcal {P}}

\newcommand{\sat}{\vDash}
\newcommand{\shlt}{\lhd}
\newcommand{\shleq}{\unlhd}
\newcommand{\shgt}{\rhd}

\newcommand{\restr}{\upharpoonright}
\newcommand{\from}{\colon}
\DeclareMathOperator{\crit}{crit}
\DeclareMathOperator{\Image}{Im}

\newcommand{\al}{\alpha}
\newcommand{\be}{\beta}
\newcommand{\ga}{\gamma}
\newcommand{\de}{\delta}
\newcommand{\ka}{\kappa}
\newcommand{\la}{\lambda}
\newcommand{\calA}{\mathcal{A}}
\newcommand{\calB}{\mathcal{B}}
\newcommand{\calC}{\mathcal{C}}
\newcommand{\calK}{\mathcal{K}}
\newcommand{\calL}{\mathcal{L}}
\newcommand{\calP}{\mathcal{P}}
\DeclareMathOperator{\upset}{\uparrow}
\newcommand{\RssT}{\Red_{\Sigma_1,\Sigma}(T)}

\newcommand{\strcpct}[1]{$L_{#1,\omega}$-compact\xspace}
\newcommand{\abstrcpct}[2]{$#2$-$L_{#1,\omega}$-compact\xspace}
\newcommand{\strcpctness}[1]{$L_{#1,\omega}$-compactness\xspace}
\newcommand{\abstrcpctness}[2]{$#2$-$L_{#1,\omega}$-compactness\xspace}

\date{June 4, 2015}
 
\begin{document}
\title[Accessible images revisited]
{Accessible images revisited}
\author[A. Brooke-Taylor and J. Rosick\'{y}]
{A. Brooke-Taylor$^{*}$ and J. Rosick\'{y}$^{**}$}
\thanks{$^{*}$  Supported by the UK EPSRC Early Career Fellowship EP/K035703/1, ``Bringing set theory and algebraic topology together''.}
\thanks {$^{**}$ Supported by the Grant agency of the Czech republic under the grant P201/12/G028.} 
\address{
\newline A. Brooke-Taylor
\newline School of Mathematics
\newline University of Bristol
\newline Howard House, Queen's Avenue
\newline Bristol, BS8 1SN, UK
\newline a.brooke-taylor@bristol.ac.uk
}
\address{
\newline 
J. Rosick\'y
\newline Department of Mathematics and Statistics, 
\newline Masaryk University, Faculty of Sciences, 
\newline Kotl\'{a}\v{r}sk\'{a} 2, 60000 Brno, Czech Republic
\newline rosicky@math.muni.cz
}
 
\begin{abstract}
We extend and improve the result of Makkai and Par\'e \cite{MP} that
the powerful image of any accessible functor $F$ is accessible, assuming there
exists a sufficiently large strongly compact cardinal.  We reduce the
required large cardinal assumption to the existence of \strcpct{\mu} 
cardinals for sufficiently large $\mu$, and also show that under this assumption
the $\la$-pure powerful image of $F$ is accessible.  
From the first of these statements, 
we obtain that the tameness of every Abstract Elementary Class 
follows from a weaker large cardinal assumption than was previously known.
We provide two ways of
employing the large cardinal assumption to prove
each result --- 
one by a direct ultraproduct construction and one using the
machinery of elementary embeddings of the set-theoretic universe.
\end{abstract} 
\keywords{ }
\subjclass{ }

\maketitle
\section{Introduction} 
It is well known that the accessibility of the category $\cf$ of free abelian groups depends on set theory --- $\cf$ is accessible 
if there is a strongly compact cardinal and it is not accessible under the axiom of constructibility (see for example \cite{EM}). 
Note that $\cf$ is the image of 
the free abelian group functor $F\from\Set\to\Ab$, where as usual
$\Set$ denotes the
category of sets and $\Ab$ denotes the category of abelian groups.
As a subcategory of $\Ab$, $\cf$ is full, and moreover is closed under
subobjects: a subgroup of a free abelian group is free. 
Generalising from this case,
M.~Makkai and R.~Par\'{e} 
proved that, assuming the existence of arbitrarily large strongly compact cardinals, the powerful image of any accessible functor
is accessible. Here, the \emph{powerful image} of an accessible functor $F\from\ck\to\cl$ is the smallest full subcategory of $\cl$ which contains 
the image of $F$ and is closed under subobjects (see \cite{MP}*{\S5.5}). 
As shown in \cite{LR}, this theorem implies Boney's theorem (see \cite{Bo}) 
asserting that, assuming the existence of arbitrarily large strongly compact cardinals, every abstract elementary class (AEC) is tame. 
A consequence of 
the latter theorem is that, assuming the existence of arbitrarily large strongly compact cardinals, Shelah's Categoricity Conjecture in a successor 
cardinal is true for abstract elementary classes (see \cite{GV}).

The aim of this note is twofold -- firstly we weaken the set theoretic assumption to the existence of arbitrarily large cardinals $\lambda$ admitting 
an \strcpct{\lambda} cardinal (see Definition~\ref{mustrcpct}). 
This thus weakens the set-theoretic assumption known to be sufficient 
to prove Boney's theorem and consequently, 
when paired with a result of Boney and Unger from a forthcoming paper
(see the remarks following Corollary~\ref{allAECtame}),
Shelah's Categoricity Conjecture
in successor cardinals for abstract elementary classes. The second contribution is that, instead of the powerful image, we can use the $\lambda$-pure
powerful image, that is, the closure of the image under $\lambda$-pure subobjects. 
The notion of purity originally arose from model theory, 
where it remains an important concept (see for example
\cite{P} for purity in the model theory of modules), but has also developed to
be a central notion in the general theory of accessible categories 
(see \cite{AR}).

We present two different ways of employing our large cardinal assumption to obtain our result. One follows \cite{MP} directly
and the other uses elementary embeddings of models of set theory 
(see \cite{BB}). 
The first step in each case is to reduce to the case of a suitable reduct
functor $\Red_{\Sigma_1,\Sigma}(T)\to\Str(\Sigma)$ 
where $T$ is a theory in infinitary logic with signature 
$\Sigma_1\supseteq\Sigma$.
It is interesting to 
note that already in 1990 Shelah and Makkai \cite{SM} had obtained 
a categoricity transfer theorem from a successor for $L_{\ka,\omega}$ theories,
where $\ka$ is a strongly compact cardinal; our approach in some sense
brings the general context of AECs back to this infinitary logic setting.

Will Boney and Spencer Unger have independently
obtained similar
results about tameness of AECs from similarly
reduced large cardinal assumptions, 
and moreover can derive large cardinal strength back from tameness assumptions.
Specifically, they show in a forthcoming paper \cite{BU} the equivalence of 
``every AEC $K$ with LS$(K)<\ka$ is $<\!\!\ka$ tame'' with $\ka$ being 
\emph{almost strongly compact}, that is, \strcpct{\mu}
for every $\mu<\ka$.
Note that the existence of an \strcpct{\mu} cardinal for every regular
$\mu$ is equivalent to the existence of a proper class of almost strongly
compact cardinals --- see Proposition~\ref{almstrcpct} below.
We would like to thank Will Boney for discussing his work with us.

\section{Preliminaries}
Recall that a $\lambda$-\emph{accessible} category is a category $\ck$ with $\lambda$-directed colimits, equipped with a set $\ca$ of $\lambda$-presentable
objects such that each object of $\ck$ is a $\lambda$-directed colimit of objects from $\ca$. Here, $\lambda$ is a regular cardinal and an object $K$ is
$\lambda$-presentable if its hom-functor $\ck(K,-)\from\ck\to\Set$ preserves $\lambda$-directed colimits. A category is \emph{accessible} if it is 
$\lambda$-accessible for some regular cardinal $\lambda$. 
See \cite{AR} for an introduction to these categories.

A functor $F\from\ck\to\cl$ is $\lambda$-\emph{accessible} if $\ck$ and $\cl$ are 
$\lambda$-accessible categories and $F$ preserves $\lambda$-directed colimits. 
It is \emph{accessible} if it is $\lambda$-accessible for some regular cardinal $\lambda$. 
For any accessible functor $F$ there are arbitrarily large regular
cardinals $\la$ such that $F$ is $\la$-accessible and preserves 
$\la$-presentable objects --- this is the \emph{Uniformization Theorem}
\cite[2.19]{AR}.
Similarly, a subcategory $\calA$ of a category $\calL$ is 
\emph{accessibly embedded} if it is full and there is some regular cardinal
$\la$ such that $\calA$ is closed under $\la$-directed colimits in $\calL$.

For two regular cardinals $\ka$ and $\ka'$ we say that $\ka$ is 
\emph{sharply less than} $\ka'$, written $\ka\shlt\ka'$, 
if $\ka<\ka'$ and for every $\la<\ka'$, the set 
$[\la]^{<\ka}$ of subsets of $\la$ of cardinality less than $\ka$,
ordered by subset inclusion $\subseteq$, has a cofinal subset of 
cardinality less than $\ka'$.
This rather set-theoretic relation on cardinals is important in the theory
of accessible categories because for $\ka<\ka'$ regular cardinals,
$\ka\shlt\ka'$ if and only if any $\ka$-accessible category 
is $\ka'$-accessible ---
see \cite[Theorem~2.11]{AR} or \cite[Theorems~2.3.10 \& 2.3.14]{MP}.
Note that $\shlt$ is transitive: this can be seen directly
\cite[Proposition~2.3.2]{MP} or 
by appeal to the above accessibility equivalents.
Also note that if $\ka\leq\la$ are regular cardinals then 
$\ka\shlt(\la^{<\ka})^+$
(since $|[\la^{<\ka}]^{<\ka}|=\la^{<\ka}$), 
and the supremum of any set of cardinals sharply
greater than $\ka$ is itself sharply greater than $\ka$.
Thus, for every regular $\ka$ 
there is a closed unbounded class of $\ka'$ such that $\ka\shlt\ka'$.

A morphism $f\from A\to B$ is $\lambda$-\emph{pure} (for $\lambda$ a regular cardinal) provided that in each commutative square
$$
\xymatrix@=3pc{
A \ar[r]^{f} & B \\
X \ar [u]^{u} \ar [r]_{h} &
Y \ar[u]_{v}
}
$$
with $X$ and $Y$ $\lambda$-presentable, $u$ factorizes through $h$, 
that is, $u=th$ for some $t\from Y\to A$.
All needed facts about accessible categories and $\lambda$-pure morphisms can be found in \cite{AR}. In particular, every $\lambda$-pure morphism
in a $\lambda$-accessible category is a monomorphism 
(see \cite[Proposition~2.29]{AR}), and clearly every isomorphism is $\la$-pure
for every $\la$.
It is also easy to see that if $f=g\circ f'$ is $\la$-pure then $f'$ is
$\la$-pure.
In the category $\Str(\Sigma)$ of structures for a $\la$-ary signature
$\Sigma$, $\la$-purity has a natural logical characterisation:
say that an $L_{\la,\la}$ formula is
\emph{positive-primitive} it it is an existentially quantified 
conjunction of atomic formulas. 
Then the $\la$-pure morphisms are precisely the substructure inclusions
that are elementary for positive-primitive formulas of $L_{\la,\la}$
\cite[Proposition~5.34]{AR}.

Following Makkai and Par\'e \cite[Section~5.5]{MP}, we define
the powerful image of a functor as follows.
\begin{defn}\label{powerfulim}
For any functor $F\from\calK\to\calL$, the
\emph{powerful image} $P(F)$ of $F$ is the least full subcategory of $\calL$ 
containing all $FA$, $A\in\calK$,
and closed under subobjects.
\end{defn}

\begin{defn}\label{def2.1}
{
Let $F\from\ck\to\cl$ be a $\lambda$-accessible functor. The $\lambda$-\emph{pure powerful image} $P_\lambda(F)$ of $F$ is the least full
subcategory of $\cl$ containing all $FA$, $A\in\ck$, and closed under $\lambda$-pure subobjects.
}
\end{defn}

Now to the large cardinal axioms we shall employ.

\begin{defn}\label{mustrcpct}
A cardinal $\kappa\geq\mu$ is called 
\emph{\strcpct{\mu}}
if any $\kappa$-complete filter on a set $I$ extends to a $\mu$-complete ultrafilter
on $I$.  
\end{defn}
Thus, if $\kappa$ is a strongly compact cardinal 
(that is, any $\kappa$-complete filter on a set $I$ extends to a $\kappa$-complete ultrafilter on $I$), 
then $\kappa$
is \strcpct{\mu} for all $\mu\leq\kappa$. 
The existence of arbitrarily large strongly compact cardinals 
therefore implies the existence
of \strcpct{\mu} cardinals for all $\mu$.
Moreover note that if $\kappa$ is \strcpct{\mu} and $\ka'\geq\ka$,
then $\ka'$ is also \strcpct{\mu}.

A word is in order about our notation for this large cardinal property,
as there are competing conventions in use.
For the purpose of this discussion, say a cardinal $\ka$ is 
\emph{\abstrcpct{\mu}{\la}}
if any  
$\kappa$-complete filter generated by at most $\la$ many sets 
extends to a $\mu$-complete ultrafilter. 
In recent work
\cites{BB, BM, BM14}, \strcpctness{\mu} has been referred to as
``$\mu$-strong compactness'', mostly with $\mu=\aleph_1$.  
This fits with a tradition in which \abstrcpctness{\ka}{\la} of $\ka$ was
called $\la$-compactness (indeed, this is the terminology of the standard
text \cite{K}), 
with ``strongly'' potentially thought of as indicating
``for all $\la$''.
However, various authors (for example \cites{A,M})
have referred to \abstrcpctness{\ka}{\la} of $\ka$ as 
``$\la$-strong compactness'',
and indeed this fits with the naming paradigm 
for the 
closely related and much more frequently considered $\la$-supercompactness.
Thus, whilst there is no chance of confusion for the $\mu=\aleph_1$ case, 
or indeed if $\mu<\ka$ is explicitly stated,
the terminology ``$\mu$-strongly compact'' could otherwise be problematic.
Our notation itself has a long history (see for example \cites{EA, EM, R}),
and has the benefit of descriptiveness: $\ka$ is $L_{\mu,\omega}$-compact
if and only if for every set $T$ of sentences in the language
$L_{\mu,\omega}$
(or indeed $L_{\mu,\mu}$), if every subset of $T$ of cardinality
less than $\ka$ is satisfiable, then $T$ is satisfiable.
Our notation is perhaps cumbersome when $\la$ is specified, 
and for this general case
Boney and Unger's proposal ``$(\mu,\la)$-strong compactness'' \cite{BU}
might be a better solution,
but since we shall never need to specify $\la$, our 
``\strcpct{\mu}'' seems a more elegant choice than their
``$(\mu,\infty)$-strongly compact''.

A cardinal $\ka$ is said to be \emph{almost strongly compact} if for every
$\mu<\ka$, $\ka$ is \strcpct{\mu}; such cardinals have been used
heavily in the recent work of Boney and Unger on tameness of AECs \cite{BU}.  
At a global level, we have the following equivalence.

\begin{propo}\label{almstrcpct}
There exists a proper class of almost strongly compact cardinals if and only
if for every cardinal $\mu$ there exists an \strcpct{\mu} cardinal.
\end{propo}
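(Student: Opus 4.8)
The plan is to prove the two implications separately; the left-to-right direction is immediate, while the converse calls for a short closure argument. The engine throughout is the monotonicity observation recorded just before the statement: if $\ka$ is \strcpct{\mu} and $\ka'\geq\ka$, then $\ka'$ is also \strcpct{\mu}. For the direction from left to right, I would argue as follows. Suppose the almost strongly compact cardinals form a proper class, so that they are unbounded in the ordinals. Given an arbitrary cardinal $\mu$, pick an almost strongly compact $\ka>\mu$. Since $\mu<\ka$, the definition of almost strong compactness immediately gives that $\ka$ is \strcpct{\mu}, which witnesses the right-hand side for this $\mu$.

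For the converse, assume that for every cardinal $\mu$ there is an \strcpct{\mu} cardinal, and let $f(\mu)$ denote the least one, so that $f(\mu)\geq\mu$. To produce a proper class of almost strongly compact cardinals it suffices, since the cardinals are cofinal in the ordinals, to find for each cardinal $\al$ an almost strongly compact $\ka>\al$. I would build $\ka$ as a closure point of the function $f$: set $\la_0=\al^+$ and $\la_{n+1}=\sup\{f(\mu):\mu<\la_n\}$, where $\mu$ ranges over cardinals. Each $\la_{n+1}$ is a cardinal, being the supremum of a set of cardinals, and $\la_{n+1}\geq\la_n$; I then put $\ka=\sup_n\la_n$, which is again a cardinal and satisfies $\ka>\al$.

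It then remains to verify that this $\ka$ is almost strongly compact. Given any cardinal $\mu<\ka=\sup_n\la_n$, I choose $n$ with $\mu<\la_n$; by the definition of the sequence, $f(\mu)\leq\la_{n+1}\leq\ka$. Since $f(\mu)$ is \strcpct{\mu} and $\ka\geq f(\mu)$, the monotonicity observation yields that $\ka$ is \strcpct{\mu}. As $\mu<\ka$ was arbitrary, $\ka$ is almost strongly compact, and as $\al$ was arbitrary, such $\ka$ occur unboundedly, giving a proper class.

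The main obstacle, such as it is, lies in organising the closure construction so that every $\mu<\ka$ has its witness $f(\mu)$ captured at or below $\ka$; this is exactly what the stagewise supremum $\la_{n+1}=\sup\{f(\mu):\mu<\la_n\}$ guarantees, and the elementary fact that a supremum of cardinals is a cardinal keeps $\ka$ itself a cardinal. No genuinely hard step arises here, since all of the large-cardinal content has been absorbed into the single monotonicity fact about \strcpctness{\mu} under enlargement.
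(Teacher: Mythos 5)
Your proof is correct and takes essentially the same route as the paper: both directions rest on the upward persistence of \strcpctness{\mu}, and the converse is handled by producing closure points of the function $\mu\mapsto$ least \strcpct{\mu} cardinal via an $\omega$-step supremum, just as the paper does with $\sup_{n\in\omega}s^n(\de)$. The only cosmetic difference is that you take the supremum of $f(\mu)$ over all $\mu<\la_n$ rather than iterating $f$ at a single point, which harmlessly avoids any appeal to monotonicity of $f$ (and note that your claim $\la_{n+1}\geq\la_n$ is not needed, since the final verification only uses that $\ka$ is the supremum of the $\la_n$).
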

\begin{proof}
The forward direction is trivial.  For the converse, suppose that for every
cardinal $\mu$ there exists an \strcpct{\mu} cardinal.  Let $s$ be the
(class) function on cardinals taking each $\mu$ to 
the least \strcpct{\mu} cardinal.  Then as for any class cardinal function,
there is a closed unbounded class $A$ of cardinals $\la$
that are closed under $s$ in the sense that $s(\de)\leq\la$ for all $\de<\la$.
Indeed, for any cardinal $\de$, $\sup_{n\in\omega}s^n(\de)$ is such a cardinal
greater than or equal to $\de$,
and $A$ is clearly closed under taking increasing unions.  But by definition
$A$ is the class of almost strongly compact cardinals.
\end{proof}

One of our proofs will use 
the following equivalent formulation of \strcpctness{\mu} 
due to Bagaria and Magidor.
We use the notation $j``\al$ for
the pointwise image of $j$ on $\al$, that is,
\[
j``\al=\Image(j\restr\al)=\{j(\be):\be\in\al\},
\]
and write $\crit(j)$ for the \emph{critical point} of $j$: the least cardinal
$\delta$ such that $j(\delta)\neq\delta$.

\begin{theo}[{\cite[Theorem~4.7]{BM}}]\label{embedcmpct}
A cardinal
$\ka$ is \strcpct{\mu} if and only if for every $\al\geq\ka$ there
is an elementary embedding $j\from V\to M$ 
definable in $V$, 
where $V$ is the universe of all sets
and $M$ is an inner model of ZFC, such that 
\begin{enumerate}
\item $\ka\geq\crit(j)\geq\mu$
and $M^{\mu}\subset M$,
\item
there is a set $A\supseteq j``\al$ such that $A\in M$ and $M\sat|A|<j(\ka)$.
\end{enumerate}
\end{theo}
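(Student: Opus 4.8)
Since the statement is a biconditional, the plan is to prove the two directions separately; I expect the direction from embeddings to compactness to carry the real work.

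\emph{From \strcpctness{\mu} to embeddings.} Fix $\al\geq\ka$. First I would build the \emph{fine filter} $F$ on the set $I=[\al]^{<\ka}$ of subsets of $\al$ of size less than $\ka$, namely the $\ka$-complete filter generated by the sets $\hat\be=\{P\in I:\be\in P\}$ for $\be<\al$. A short check shows $F$ is proper and $\ka$-complete: any intersection of fewer than $\ka$ of the $\hat\be$ contains the set of the corresponding $\be$'s, which itself lies in $I$. Using \strcpctness{\mu} I would extend $F$ to a $\mu$-complete ultrafilter $U$ on $I$; since $\mu$ is uncountable $U$ is countably complete, so the ultrapower embedding $j=j_U\from V\to M$ has well-founded, hence transitive, target $M$. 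Then I would verify both clauses using the single seed $A:=[\id_I]_U$. Clause (2) follows from \L o\'s's theorem: since $\{P:|P|<\ka\}=I\in U$ we get $M\sat|A|<j(\ka)$, and since $U$ is fine, $\hat\be\in U$ gives $j(\be)\in A$ for every $\be<\al$, i.e.\ $j``\al\subseteq A\in M$. For (1), $\mu$-completeness gives $\crit(j)\geq\mu$ and $M^{\mu}\subset M$ (the latter by the usual ultrapower computation, using $j\restr\mu=\id\restr\mu$ to reassemble a length-$\mu$ sequence from a single representing function); and $\crit(j)\leq\ka$ because $A\supseteq j``\ka$, so $\crit(j)>\ka$ would force $M\sat|A|\geq|\ka|=j(\ka)$, contradicting $M\sat|A|<j(\ka)$.

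\emph{From embeddings to \strcpctness{\mu}.} Let $D$ be a $\ka$-complete (proper) filter on a set $I$; I must extend it to a $\mu$-complete ultrafilter. Choose $\al\geq\ka$ with $\al\geq|D|$, fix a surjection $e\from\al\to D$ in $V$, and invoke the hypothesis to get $j\from V\to M$ with $A\supseteq j``\al$ as in (1) and (2). The plan is to produce a single point $a\in j(I)$ lying in $j(Y)$ for every $Y\in D$; granting this, $U_a=\{Y\subseteq I:a\in j(Y)\}$ is an ultrafilter on $I$ which is $\crit(j)$-complete, hence $\mu$-complete as $\crit(j)\geq\mu$, and $U_a\supseteq D$ by the choice of $a$. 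The existence of $a$ is exactly where clause (2) enters: by elementarity $\{j(Y):Y\in D\}=j(e)``(j``\al)$, and since $j``\al\subseteq A$ this family is contained in the $M$-set $B:=j(e)``(A\cap j(\al))$. Every member of $B$ is a value of $j(e)$, hence lies in the filter $j(D)$, while $M\sat|B|\leq|A|<j(\ka)$; since $j(D)$ is $j(\ka)$-complete in $M$ by elementarity, I conclude $\bigcap B\in j(D)$, so $\emptyset\neq\bigcap B\subseteq\bigcap_{Y\in D}j(Y)$, and any $a$ in this last intersection works.

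\emph{Main obstacle.} The delicate point is the reverse direction, specifically the nonemptiness of $\bigcap_{Y\in D}j(Y)$: the family $\{j(Y):Y\in D\}$ may have size $|D|\geq\ka$ in $V$, so the $\ka$-completeness of $D$ is by itself useless. The whole force of clause (2) is to collapse this family, inside $M$, into a subfamily $B$ of the $j(\ka)$-complete filter $j(D)$ of $M$-cardinality strictly below $j(\ka)$, so that the \emph{internal} $j(\ka)$-completeness of $j(D)$ applies. I would also take care of the degenerate cases (for instance $\mu=\aleph_0$, where countable completeness and hence well-foundedness can fail, and $|D|<\ka$, where $D$ is already essentially principal), and of the routine verification that $U_a$ is genuinely an ultrafilter and is $\crit(j)$-complete.
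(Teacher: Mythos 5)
The paper offers no proof of this statement---it is quoted verbatim from Bagaria and Magidor \cite[Theorem~4.7]{BM}---so there is no internal argument to compare against; your proof is, in any case, correct and is essentially the standard one from that source. Both directions are sound: the fine $\ka$-complete filter on $[\al]^{<\ka}$ with seed $A=[\id]_U$ gives the forward implication (including $\crit(j)\leq\ka$ via $j``\ka\subseteq A$ and $M\sat|A|<j(\ka)$, and $M^\mu\subseteq M$ via a single representing function together with $j\restr\mu=\id$), and in the converse you use the approximating set $A$ exactly as intended, to trap $\{j(Y):Y\in D\}$ inside a subfamily $B\subseteq j(D)$ with $M\sat|B|<j(\ka)$ so that the \emph{internal} $j(\ka)$-completeness of $j(D)$ produces a seed $a$ with $D\subseteq U_a$. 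The caveat you flag is genuine---for $\mu=\aleph_0$ every $\ka$-complete filter extends to an (automatically finitely complete) ultrafilter by Zorn's lemma, while condition (2) forces $j$ to be nontrivial, so the equivalence must be read for uncountable $\mu$---but this is harmless here, since the paper only ever invokes the theorem with $\mu=\mu_\calL$, which is uncountable.
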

Note that $j``\al$ is not assumed to be an element of $M$; 
we shall see that for our
purposes (as in many other cases) it is sufficient to work with
the approximating set $A\supseteq j``\al$ in $M$.

\section{Powerful Images}

\begin{notn}\label{not}
Let $\be$ and $\la$ be cardinals, $\la$ regular.  
We denote by $\ga_{\la,\be}$ the least cardinal greater than or equal to $\be$
such that $\la\shleq\ga_{\la,\be}$.
Following \cite[Examples 2.13(3)]{AR}, we have $\ga_{\la,\be}\leq (2^\beta)^+$
if $\la<\beta$ and $\gamma_{\la,\be}=\lambda$ otherwise. 

Let $\ck$ be a $\lambda$-accessible category. By $\Pres_\lambda\ck$ we denote a full subcategory of $\ck$
such that each $\lambda$-presentable object of $\ck$ is isomorphic to exactly one object of this subcategory.
With $\be=|\Pres_\la\ck|$ we let $\ga_\ck=\ga_{\la,\be}$ and
$\mu_\ck=(\gamma_\ck^{<\gamma_\ck})^+$,
so that $\la\shleq\ga_\calK\shlt\mu_\calK$.
Note that this latter notation is slightly ambiguous 
because $\ck$ is $\lambda$-accessible for many $\lambda$,
but the choice of $\la$ will always be clear.

The category of morphisms of $\ck$ is denoted as $\ck^\to$ because it is the category of functors from the category $\to$ (having two objects
and one non-identity morphism) to $\ck$. 
\end{notn}

\begin{theo}\label{th2.2}
Let $\lambda$ be a regular cardinal and $\calL$ a $\la$-accessible category 
such that there exists an \strcpct{\mu_\cl} cardinal. Then any $\lambda$-pure powerful image 
of a $\lambda$-accessible functor to $\calL$ 
preserving $\mu_\cl$-presentable objects is accessible
and accessibly embedded in $\calL$.
\end{theo}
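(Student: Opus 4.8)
The plan is to follow the strategy of Makkai and Par\'e: reduce $F$ to a reduct functor, use the \strcpct{\mu_\cl} cardinal to obtain closure under $\mu_\cl$-complete ultraproducts, and then deduce accessibility, driving the final step with the Bagaria--Magidor embeddings of Theorem~\ref{embedcmpct}. First I would replace $F$ by a model-theoretic presentation. Since $\la\shleq\ga_\cl\shlt\mu_\cl$ we have $\la\shlt\mu_\cl$, so by the accessibility criterion for $\shlt$ the category $\cl$ is $\mu_\cl$-accessible; as $F$ preserves $\la$-directed, hence $\mu_\cl$-directed, colimits and by hypothesis preserves $\mu_\cl$-presentable objects, $F$ is $\mu_\cl$-accessible and is determined by its restriction to $\Pres_{\mu_\cl}\ck$. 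Using the representation theory of accessible categories and functors (\cite{MP}, \cite{AR}) I would realise $\cl$ as a full subcategory of $\Str(\Sigma)$ closed under $\la$-directed colimits and axiomatised in $L_{\mu_\cl,\mu_\cl}$, and realise $F$ as the reduct functor $\RssT\to\Str(\Sigma)$ with $\Sigma_1\supseteq\Sigma$ and $T$ an $L_{\mu_\cl,\mu_\cl}$-theory; preservation of $\mu_\cl$-presentables is exactly what keeps $\Sigma_1$ and $T$ within $L_{\mu_\cl,\mu_\cl}$. Because $\la$-pure morphisms of $\Str(\Sigma)$ are the positive-primitive $L_{\la,\la}$-elementary substructure inclusions, this identifies $P_\la(F)$ with the class $\ca$ of those $\Sigma$-structures that are $\la$-pure substructures of $\Sigma$-reducts of models of $T$.

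Next I would fix an \strcpct{\mu_\cl} cardinal $\ka$ and show that $\ca$ is closed under $\mu_\cl$-complete ultraproducts. By Definition~\ref{mustrcpct} such ultrafilters are plentiful: every $\ka$-complete filter extends to a $\mu_\cl$-complete ultrafilter. Given a family $(M_i)_{i\in I}$ in $\ca$ and a $\mu_\cl$-complete ultrafilter $\cu$ on $I$, pick $P_i\sat T$ with $M_i$ a $\la$-pure, i.e.\ positive-primitive $L_{\la,\la}$-elementary, substructure of $P_i\restr\Sigma$. Reducts commute with ultraproducts, and since $\cu$ is $\mu_\cl$-complete the \L o\'s theorem preserves all of $L_{\mu_\cl,\mu_\cl}$; as $\la\le\mu_\cl$ this preserves both satisfaction of $T$ and positive-primitive $L_{\la,\la}$-elementarity of the inclusions. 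Hence $\prod_\cu M_i$ is a $\la$-pure substructure of $(\prod_\cu P_i)\restr\Sigma$ with $\prod_\cu P_i\sat T$, so $\prod_\cu M_i\in\ca$.

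Finally I would prove $\ca$ accessible and accessibly embedded, for which the cleanest route runs through Theorem~\ref{embedcmpct}. Given a $\mu_\cl$-directed colimit $M=\colim_d M_d$ of objects of $\ca$, formed in $\cl$, apply the theorem at a sufficiently large $\al$ to get an embedding $j\from V\to N$ (I write $N$ for the inner model, to avoid clashing with $M$) with $\crit(j)\ge\mu_\cl$, $N^{\mu_\cl}\subset N$, and a set $A\supseteq j``\al$ in $N$ with $N\sat|A|<j(\ka)$. The closure $N^{\mu_\cl}\subset N$ lets $N$ see the entire $\mu_\cl$-directed system together with the $\la$-pure witnesses $M_d\to P_d\restr\Sigma$, $P_d\sat T$, and the approximating set $A$ lets one assemble inside $N$ a single model of $j(T)$ into whose $\Sigma$-reduct $j(M)$ embeds positive-primitively; since $\crit(j)\ge\mu_\cl>\la$ we have $j(\la)=\la$, so this is a $\la$-pure embedding and $N\sat j(M)\in j(\ca)$. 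Elementarity of $j$ then reflects $M\in\ca$ back to $V$, so $\ca$ is closed under $\mu_\cl$-directed colimits; with the presentability bound encoded in $\mu_\cl=(\ga_\cl^{<\ga_\cl})^+$ and the structure theory of \cite{AR} this yields that $\ca$ is accessible and accessibly embedded in $\cl$. A purely model-theoretic alternative is to combine the ultraproduct-closure above with closure under positive-primitive substructures to extract an $L_{\mu_\cl,\mu_\cl}$-axiomatisation of $\ca$ and to invoke the accessibility of the categories of models of such theories.

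I expect this last step to be the main obstacle. Passing from closure properties to accessibility requires keeping the sizes of $\Sigma_1$, of $T$, and of the presentability ranks bounded simultaneously, which is precisely why $\mu_\cl=(\ga_\cl^{<\ga_\cl})^+$ with $\ga_\cl\shlt\mu_\cl$ is the correct parameter. Moreover, in the ultraproduct route there is a genuine tension between the mere $\mu_\cl$-completeness of the tail filter of a $\mu_\cl$-directed system and the $\ka$-completeness demanded by \strcpctness{\mu_\cl} when one extends a filter to an ultrafilter; it is exactly to sidestep this that the elementary-embedding formulation of Theorem~\ref{embedcmpct}, as in \cite{BB}, is so convenient.
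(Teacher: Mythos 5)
Your overall architecture (reduce to a reduct functor $\RssT\to\Str(\Sigma)$, then use the large cardinal to close off under suitable colimits) matches the paper, but there is a genuine gap at the decisive step, and it is precisely the ``tension'' you flag at the end. You attempt to prove closure of the pure powerful image under $\mu_\cl$-directed colimits, and you claim the elementary-embedding formulation sidesteps the mismatch between the $\mu_\cl$-completeness of the tail filter and the $\ka$-completeness required by \strcpctness{\mu_\cl}. It does not. In the embedding argument the index poset $j(I)$ is only $j(\mu_\cl)$-directed when $I$ is $\mu_\cl$-directed, while the approximating set $A\cap j(I)$ with $A\supseteq j``I$ is only guaranteed to have size $<j(\ka)$ in the inner model; such a set need not have an upper bound in a merely $j(\mu_\cl)$-directed poset, so you cannot assemble the single target model of $j(T)$. (Also, $N^{\mu_\cl}\subseteq N$ does not let $N$ ``see the entire system'': the system has $|I|=\al$ objects with $\al$ typically far larger than $\mu_\cl$.) The paper's resolution is to prove less: first arrange $\mu_\cl\shlt\ka$ (any larger cardinal is still \strcpct{\mu_\cl}), so the reduct functor is $\ka$-accessible and preserves $\ka$-presentable objects, and then prove closure of $\RssT$ only under \emph{$\ka$-directed} colimits. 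There the tail filter is $\ka$-complete, so it extends to a $\mu_\cl$-complete ultrafilter, and $j(I)$ is $j(\ka)$-directed, so $A\cap j(I)$ has an upper bound $i_0$; the induced map $h\from C\to j(D)(i_0)$ is $\la$-pure because $j\restr C=j(\delta)(i_0)\circ h$ is. Closure under $\ka$-directed colimits together with the $\ka$-accessibility of the reduct functor is exactly what yields accessibility and accessible embedding; $\mu_\cl$-accessibility of $P_\la(F)$ is neither claimed nor obtainable this way.

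A second, smaller gap is in your first step. Identifying $P_\la(F)$ with the class of $\la$-pure substructures of $\Sigma$-reducts of models of $T$ is not enough: to conclude accessibility you must exhibit $P_\la(F)$ as the \emph{full image} of an accessible functor that preserves $\ka$-presentable objects, so that every object of $P_\la(F)$ is a $\ka$-directed colimit of $\ka$-presentable objects already lying in $P_\la(F)$. The paper does this by building the category $\calP$ of $\la$-pure morphisms $L\to FK$ as a pseudopullback of $\Pure_\la(\cl)^\to\to\cl^\to\leftarrow \Id_\cl\downarrow F$, verifying via \cite[2.32--2.34, 2.43]{AR} and the Pseudopullback Theorem that $\calP$ is $\mu_\cl$-accessible with the right preservation properties, and only then axiomatising $\calP$ itself (not $\ck$ or the image of $F$) by an $L_{\mu_\cl,\mu_\cl}$-theory $T$. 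Your closing ``purely model-theoretic alternative'' (axiomatise $\ca$ from its closure under ultraproducts and pure substructures) would need substantial justification, since the pure powerful image is in general only accessible, not axiomatisable over $\Str(\Sigma)$.
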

The requirement on the functor that it preserve $\mu_\calL$-presentable objects
does not materially reduce the applicability of the theorem.
Indeed, if some functor $F$ is $\la'$-accessible, then it follows from
the Uniformization Theorem \cite[2.19]{AR}
that there is $\la\shgt\la'$ such that $F$ is $\la$-accessible
and preserves $\la$-presentable objects.  
With this $\la$, and $\mu_\calL$ chosen as above such that in particular
it is sharply greater than $\la$, 
we have by Remark~2.20 of \cite{AR}
that $F$ also preserves $\mu_\calL$-presentable objects.
\begin{proof}
To assist the reader, we break the proof into three steps, with a
choice in the final step regarding the way in which the large cardinal 
axiom is used. \\

\noindent{\bf Step 1: realise $P_\la(F)$ as a full image.}
Consider a $\lambda$-accessible functor $F\from \ck\to\cl$ which preserves 
$\mu_\cl$-presentable objects. 
To deal with $P_\la(F)$, we shall first recast it as 
the \emph{full image} of a suitable functor $H:\calP\to\calL$, that is,
the full subcategory of $\calL$ with objects of the form $Hp$ for $p$ an
object of $\calP$.
The functor $H$ will be quite natural: it is
the functor from the category $\calP$ of $\la$-pure morphisms
$p:L\to FK$ of $\calL$, taking each such $p$ to its domain $L$.
The work will be in characterising $\calP$ appropriately,
and in particular checking that it and all of the
functors involved are suitably accessible and 
preserve $\mu_\calL$-presentable objects.

Towards this goal, let $\Pure_\lambda(\cl)$ be the subcategory of $\cl$ consisting of all $\cl$-objects and all $\lambda$-pure morphisms. Following \cite[Proposition~2.34]{AR}, this category is accessible, has $\lambda$-directed colimits and the embedding $G\from \Pure_\lambda(\cl)\to\cl$ preserves $\lambda$-directed colimits. Going 
through the proof, one obtains that the category $\Pure_\lambda(\cl)$ is $\mu_\cl$-accessible and the functor $G$ preserves $\mu_\cl$-presentable objects. 
We now very briefly sketch the steps of this argument; 
the reader willing to take the result on
faith may skip ahead to the next paragraph.
First, we may consider the canonical full embedding 
$E\from \cl\to\Set^{\ca^{\op}}$ where $\ca=\Pres_\lambda\cl$;
we use this embedding as a technical device here but note that it will
have an important role to play later in our proof.
Following \cite[Proposition~2.8]{AR}, $E$ preserves $\lambda$-directed colimits and $\lambda$-presentable objects (indeed it sends $\lambda$-presentable objects to finitely presentable ones). Thus it preserves $\gamma_\cl$-directed colimits and $\gamma_\cl$-presentable objects (see \cite[Remarks~2.18(2) and 2.20]{AR}). Hence, following the proof of \cite[Proposition~2.32]{AR}, $\cl$ is closed 
in $\Set^{\ca^{\op}}$ under $\gamma_\cl$-pure subobjects. Finally, following the proof of \cite[Theorem~2.33]{AR}, the category $\Pure_\lambda(\cl)$ 
is $\mu_\cl$-accessible and the functor $G$ preserves $\mu_\cl$-presentable objects. 

Since $\la\shleq\gamma_\cl\triangleleft\mu_\cl$, we have by transitivity that
$\lambda\triangleleft\mu_\cl$, and thus the category $\cl$ is
$\mu_\cl$-accessible. Hence the categories $\Pure_\lambda(\cl)^\to$ and $\cl^\to$ are $\mu_\cl$-accessible (see \cite[Exercise 2.c(1)]{AR}) and the 
induced functor 
$G^\to:\Pure_\la(\cl)^\to\to\cl^\to$ 
preserves $\lambda$-directed colimits and $\mu_\cl$-presentable objects;
$G^\to$ is none other than the inclusion functor.

Now, recall that the objects of the comma category $\Id_\cl\downarrow F$ 
are morphisms $L\to FK$ with $L$ 
in $\cl$ and $K$ in $\ck$. It is shown in \cite[Proposition~2.43]{AR} that $\Id_\cl\downarrow F$ is $\mu_\cl$-accessible, has $\lambda$-directed colimits and the domain and codomain projection functors 
from $\Id_\cl\downarrow F$ to $\cl$ preserve $\lambda$-directed colimits and $\mu_\cl$-presentable objects. Thus the embedding $Q\from \Id_\cl\downarrow F\to\cl^\to$ preserves $\lambda$-directed colimits and $\mu_\cl$-presentable objects. 

We may now appropriately characterise the category of $\la$-pure morphisms
$L\to FK$ of $\calL$: it is the pullback object $\calP$ from 
the pullback diagram
$$
\xymatrix@=3pc{
\Pure_\lambda(\cl)^\to \ar[r]^{G^\to} & \cl^\to \\
\cp \ar [u]^{\bar{Q}} \ar [r]_{\bar{G}} &
\Id_\cl\downarrow F \ar[u]_{Q}.
}
$$
Recall that a functor $F:\calB\to\calC$ is \emph{transportable}
if for every object $B$ of $\calB$ and every isomorphism $c:FB\to C$ of 
$\calC$ there is a unique isomorphism $b:B\to B'$ of $\calB$ such that
$c=Fb$ \cite[page 99]{MP}.
Since isomorphisms are $\lambda$-pure, the functor $G^\to$ is clearly
transportable. 
Thus the pullback above
is a pseudopullback and $\cp$, $\bar{Q}$ and $\bar{G}$ are accessible by \cite[Proposition~5.1.1 and Theorem~5.1.6]{MP}. More precisely, following the Pseudopullback Theorem from \cite{RR} and \cite[Proposition~3.1]{CR}, the category $\cp$ is $\mu_\cl$-accessible, has $\lambda$-directed colimits and the functors $\bar{G},\bar{Q}$ preserve 
$\lambda$-directed colimits and $\mu_\cl$-presentable objects. The composition $G^\to\bar{Q}$ therefore preserves $\lambda$-directed colimits 
and $\mu_\cl$-presentable objects. 

We next consider the domain projection functor $P\from \cl^\to\to\cl$ sending $A\to B$ to $A$. This projection $P$ preserves $\lambda$-directed colimits and 
$\mu_\cl$-presentable objects, so the composition $H=PG^\to\bar{Q}\from \cp\to\cl$ has the same properties. Again, since objects of the category $\cp$ are $\lambda$-pure morphisms $f\from L\to FK$ with $L$ in $\cl$ and $K$ in $\ck$, and $H$ sends $f$ to $L$, the category $P_\lambda(F)$ is equal to the full image of $H$, the full subcategory of $\cl$ consisting of objects $Hf$ with $f$ in $\cp$. \\

\noindent{\bf Step 2: recast in an infinitary language.}
For any small category $\calA$,
the category $\Set^{\ca^{\op}}$ can be considered to be $\Str(\Sigma)$, 
where $\Sigma$ is a many-sorted signature whose sorts are $\ca$-objects and 
whose unary operations with domain sort $A$ and codomain sort $B$ are
morphisms in $\calA$ from $B$ to $A$. 
Taking $\calA$ to be $\Pres_\la(\calL)$ and taking $\Sigma$ correspondingly, 
we have (as mentioned above) a canonical full embedding
$E\from \cl\to\Str(\Sigma)$, which preserves
$\mu_\cl$-directed colimits and $\mu_\cl$-presentable objects. Similarly, we have an embedding $E'\from \cp\to\Str(\Sigma')$ preserving
$\mu_\cl$-directed colimits and $\mu_\cl$-presentable objects where $\Str(\Sigma')=\Set^{\cb^{\op}}$ where $\cb$ is a representative small full
subcategory of $\cp$ of $\mu_\cl$-presentable objects. Following \cite[Corollary~4.18 and Remark~5.33]{AR}, there is an $L_{\mu_\cl,\mu_\cl}(\Sigma')$-theory $T$ such that $\cp$ is equivalent to the category $\Mod(T)$ of models of $T$. In the same way as in the proof of \cite[Theorem~2]{R}, let $\cc=\ca\coprod\cb$
with the corresponding signature $\Sigma_1$ (which is a disjoint union of $\Sigma$ and $\Sigma'$). Then $T$ can be considered as a theory of 
$L_{\mu_\cl,\mu_\cl}(\Sigma_1)$ and $\Str(\Sigma_1)\cong\Str(\Sigma)\times\Str(\Sigma')$. Let $E_1\from \Mod(T)\to\Str(\Sigma_1)$ be induced by $HE$ and $E'$. 
Then the full image of $H$ is equivalent to the full image of the reduct functor $R\from \Mod(T)\to\Str(\Sigma)$. Moreover, $R$ preserves $\mu_\cl$-directed colimits and $\mu_\cl$-presentable objects and the full image of $R$ is the full subcategory $\Red_{\Sigma_1,\Sigma}(T)$ of the category $\Str(\Sigma)$ of $\Sigma$-structures consisting of $\Sigma$-reducts of $T$-models. 

Let $\kappa$ be an \strcpct{\mu_\cl} cardinal. Since any cardinal greater than an \strcpct{\mu_\cl} cardinal is itself
\strcpct{\mu_\cl}, we may assume that $\mu_\cl\shlt\ka$.
Thus $R$ is $\kappa$-accessible and preserves $\kappa$-presentable objects (see \cite{AR} 2.18 and 2.20). Consequently, any object of $\Red_{\Sigma_1,\Sigma}(T)$ is a $\kappa$-directed colimit of $\kappa$-presentable objects 
of $\Str(\Sigma)$ lying in $\Red_{\Sigma_1,\Sigma}(T)$. Thus it remains to prove that $\Red_{\Sigma_1,\Sigma}(T)$ is closed under $\kappa$-directed colimits in $\Str(\Sigma)$.
So
let $D\from I\to\Red_{\Sigma_1,\Sigma}(T)$ be a $\kappa$-directed diagram and $\delta\from D\to C$ be its colimit in $\Str(\Sigma)$; we shall provide two 
proofs that $C$ is in $\Red_{\Sigma_1,\Sigma}(T)$, using the
\strcpctness{\mu_\calL} of $\ka$ in different ways. 
In each case we shall actually show that there is a $\la$-pure morphism from
$C$ to an object of $\Red_{\Sigma_1,\Sigma}(T)$, which clearly suffices. \\

\noindent{\bf Step 3 version (i): a ``hands-on'' ultraproduct.}
This version parallels the original argument of Makkai and Par\'e, 
picking up at the end of page 139 of \cite{MP}.
Let $T_C$ be the \emph{$\lambda$-pure diagram of $C$}, that is,
the set of positive-primitive and negated positive-primitive $L_{\lambda,\lambda}(\Sigma_C)$-formulas valid in $C$, where $\Sigma_C$ is $\Sigma$ augmented with
constant symbols $c_a$ for all of the elements $a$ of $C$. Then $T_C$-models are $\lambda$-pure morphisms
$C\to M$ with $M$ in $\Str(\Sigma)$
(see \cite[Proposition~5.34]{AR}). 
This is analogous to the fact used in \cite{MP} that models of the diagram
$\mathrm{Diag}^{\pm}_C$ of $C$
correspond to one-to-one morphisms with domain $C$.

We hence have that 
$(T\cup T_C)$-models are $\lambda$-pure morphisms $C\to X$ with $X$ in $\Red_{\Sigma_1,\Sigma}(T)$. 
It therefore suffices
to prove that the theory $T\cup T_C$ has a model, 
as the composition of two $\la$-pure
morphisms is $\la$-pure. 
Of course the proof that $T\cup T_C$ has a model 
is by the \strcpctness{\mu} of $\ka$.
Since $I$ is $\kappa$-directed, we can form the $\kappa$-complete filter $\cf$ on $I$ generated 
by the sets $\upset i=\{j\in I\,|\,i\leq j\}$, $i\in I$. 
Since $\kappa$ is \strcpct{\mu_\cl}, 
there is a $\mu_\cl$-complete ultrafilter $\cu$ on $I$ extending
$\cf$. 
We claim that we may take the ultraproduct $\prod_\cu Di$ 
as a model of $T\cup T_C$, with
{\L}o\'s's Theorem applying for our $L_{\mu_\cl,\mu_\cl}$ (and $L_{\la,\la}$)
formulas by the $\mu_\cl$-completeness of $\cu$. 
Indeed, by assumption, $Di$ is a model of $T$ for every $i\in I$. 
For every $i\in I$ and $a\in C$, if there is 
an element of $Di$ that maps to $a$ under the
colimit map to $C$, then interpret the 
constant symbol $c_a$ in $Di$ by 
such an element, and say that $c_a$ has been assigned
\emph{coherently}.  If there is no such element assign the value of
$c_a$ arbitrarily.
Each $Di$ thus becomes a $\Sigma_C$-structure.
Because of the arbitrarily assigned constants $c_a$,
the morphisms of the diagram $D$ may fail
to be $\Sigma_C$-homomorphisms,
but this is irrelevant for our ultraproduct construction.
What is important is that for every positive primitive $L_{\la,\la}$
formula $\varphi$, 
there is an $i$ in $I$ such that every $c_a$ appearing in 
$\varphi$ is assigned coherently in $Di$, and thus also in $Dj$ for every
$j\in\upset i$ --- this follows from the $\ka$-directedness of $D$,
and the fact that, as colimit in $\Str(\Sigma)$, $C$ is simply the direct
limit of the diagram $D$.
Because $\varphi$ is positive primitive 
we then have that, for $j'\geq j\geq i$, if $Dj$ is a model of $\varphi$, 
then $Dj'$ is a model of $\varphi$.
Further, $\varphi$ holds in $C$ if and only if there is some $i'\geq i$
such that $Di'$ satisfies $\varphi$: for $\varphi$ to be true in $C$, 
the existential quantification in
$\varphi$ must be witnessed by particular elements of $C$, 
and there will be some $i'$ large enough that 
that the corresponding constants $c_a$ are assigned coherently in $Di'$
and witness that $\varphi$ holds in $Di'$.
We thus have that if $\varphi$ is true in $C$ then 
it is true in every member of $\upset i'$ for such $i'$, 
and if $\varphi$ is false in $C$ it is false in every member of 
$\upset i$.  Since these sets are in the ultrafilter $\cu$, we have
from {\L}o\'s's Theorem for $L_{\mu_\calL,\mu_\calL}$
that $\prod_\cu Di$ is a model of $\varphi$ if and only if $C$ is,
and may conclude that it is indeed a model of $T\cup T_C$.\\

\noindent{\bf Step 3 version (ii): using an elementary embedding.}
This second approach is via Theorem~\ref{embedcmpct}.
Let $\al=|I|$, and 
let $j\from V\to M$ be an elementary embedding as in 
Theorem~\ref{embedcmpct} for our $\ka$, $\al$ and $\mu=\mu_\calL$.
Note that because $M$ is closed under $<\mu_\calL$-tuples,
$M$ correctly computes whether an object is in $\RssT$:
being a model for the theory $T$ is $\Delta_1$-definable from the
``set of all $<\mu$-tuples'' function $\calP_\mu$, and hence is
absolute between models of set theory that agree on $\calP_\mu$
(see \cite[Proposition~16]{BB}, \cite[Proposition~3.3]{BCMR}).
Consider the diagram $j(D)$ in $M$.  
It is a $j(\ka)$-directed diagram with index category $j(I)$ of cardinality
$j(\al)$.
In particular, we may consider the pointwise image $j``I$ as
a subset of $j(I)$; it has cardinality $\al$, and whilst it
need not be in $M$, by the choice of $j$ as in Theorem~\ref{embedcmpct},
there is a set $A\in M$ such that $A\supseteq j``I$ and 
$M\sat|A|<j(\ka)$.  Hence, in $M$ we may take 
$A\cap j(I)\subseteq j(I)$, and
this set will have cardinality less than $j(\ka)$.  It
therefore has an upper bound $i_0$ in $j(I)$, which in particular is an 
upper bound for $j``I$.

For every object $i$ of $I$, the function $j\restr D(i)\from D(i)\to j(D(i))=
j(D)(j(i))$ 
is a $\Sigma$-homomorphism, by elementarity of $j$.  
Moreover, for every morphism $f\from i\to i'$ of $I$, we have a commuting
square
\[
\xymatrix{
D(i)\ar[r]^{D(f)}\ar[d]_{j\restr D(i)}&D(i')\ar[d]^{j\restr D(i')}\\
j(D(i))\ar[r]_{j(D(f))}&j(D(i'))
}
\]
as one can check by chasing around an element $d\in D(i)$: 
$j(D(f)(d))=j(D(f))(j(d))$, again by elementarity.
Composing these maps with
the maps $j(D(i))=j(D)(j(i))\to j(D)(i_0)$, 
we have a cocone $\nu$ in $\Str{\Sigma}$ (in $V$) from $D$ to
$j(D)(i_0)$, and hence there is a unique homomorphism $h\from C\to j(D)(i_0)$ 
such that $h\circ\delta=\nu$.  Moreover by uniqueness we have that
$j(\delta)(i_0)\circ h$ must equal $j\restr C\from C\to j(C)$, as
$j\restr C\circ \delta(i)=j(\delta)(j(i))\circ j\restr D(i)$
for every object $i$ of $I$ by elementarity.  Since $j\restr C$ is $\la$-pure,
$h$ is $\la$-pure also, and is the desired $\la$-pure morphism to an object
of $\RssT$.
\end{proof}

\begin{rem}\label{re2.3}
{
\em
(1) We have not only proved that $P_\lambda(F)$ is accessible but also
that it is accessibly embedded in $\cl$. 
With the recasting as an inclusion
$\Red_{\Sigma_1,\Sigma}(T)\to\Str(\Sigma)$ as in Steps 1 and 2 of the proof,
this latter aspect also follows from \cite[Theorem~1 and Remark~1(2)]{R}.


(2) Since a $\lambda'$-pure morphism is $\lambda$-pure for $\lambda\leq\lambda'$, we have $P_{\lambda'}(F)\subseteq P_\lambda(F)$ for $\lambda\leq\lambda'$.
Moreover $\cap_\lambda P_\lambda(F)$ is the smallest full subcategory of $\cl$ containing all $FK$, $K\in\ck$, and closed under split subobjects. This
category need not be accessible and accessibly embedded in $\calL$.

To see this, consider the category $\Pos$ of posets and monotone mappings. Let $\ck$ be the category of pairs $(i,p)$ of morphisms of $\Pos$ such that $pi=\id$.
Morphisms are pairs of morphisms $(u,v)\from (i,p)\to (i',p')$ such that $vi=i'u$ and $up=p'v$. 
Let $F\from \ck\to\Pos^\to$ project $(i,p)$ to $i$.
This functor is accessible and its full image consists of split monomorphisms in $\Pos$ (see \cite[Example~3.5(1)]{R1}). 
It is easy to see that this full image is closed under split subobjects. In fact, consider a split monomorphism $(u,v)\from j\to i$ where $i=F(i,p)$. This
means that there is $(r,s)\from i\to j$ such that $ru=\id$ and $sv=\id$. Thus $rpvj=rpiu=ru=\id$.

But the closure of split monomorphisms under $\lambda$-directed colimits precisely consists of $\lambda$-pure monomorphisms (see \cite[Proposition~2.30]{AR}). 
It is easy to see that, for each regular cardinal $\lambda$, there is a $\lambda$-pure monomorphism which does not split. Thus the full image of $F$
is closed under split subobjects but is not 
accessibly embedded into $\Pos$.
}
\end{rem}

\begin{theo}\label{th2.7}
Let $\lambda$ be a regular cardinal and $\calL$ an accessible category 
such that there exists an \strcpct{\mu_\cl} cardinal. 
Then the powerful image of any $\lambda$-accessible functor to $\calL$ preserving $\mu_\cl$-presentable objects is accessible and accessibly embedded in $\calL$.
\end{theo}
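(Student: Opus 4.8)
The plan is to run the three–step argument of the proof of Theorem~\ref{th2.2} essentially verbatim, systematically replacing ``$\la$-pure morphism'' by ``monomorphism'' and, in the last step, the $\la$-pure diagram by the full atomic diagram. In Step~1 one realises the powerful image $P(F)$ of a $\la$-accessible $F\from\ck\to\cl$ preserving $\mu_\cl$-presentable objects as the full image of the domain-projection functor $H'\from\cp'\to\cl$, where now $\cp'$ is the category of \emph{monomorphisms} $L\to FK$ of $\cl$; this is legitimate because, monomorphisms being closed under composition, $P(F)$ consists exactly of the domains of monomorphisms into the objects $FK$. Exactly as before, $\cp'$ arises as the pseudopullback of $Q\from\Id_\cl\downarrow F\to\cl^\to$ along the inclusion $\Mono(\cl)^\to\to\cl^\to$, where $\Mono(\cl)$ is the subcategory of all objects of $\cl$ and all monomorphisms between them (the inclusion is transportable since isomorphisms are monomorphisms, so the pullback is again a pseudopullback).

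The one genuinely new ingredient --- and the step I expect to be the main obstacle --- is to check that $\Mono(\cl)$ can play the role that $\Pure_\la(\cl)$ did before: that it is $\mu_\cl$-accessible, closed under $\mu_\cl$-directed colimits in $\cl$, and that the inclusion preserves $\mu_\cl$-presentable objects. This is the analogue of \cite[Proposition~2.34]{AR}, but accessibility of the category of \emph{all} monomorphisms is more delicate than for $\la$-pure morphisms. I would deduce it from the behaviour of monomorphisms in the presheaf category $\Str(\Sigma)=\Set^{\ca^{\op}}$ into which $\cl$ embeds fully via $E$: there monomorphisms are exactly the pointwise injections, the subcategory of objects and monomorphisms is readily accessible, and $E$ preserves and reflects both monomorphisms and $\mu_\cl$-directed colimits (the latter using closure of $\cl$ under $\ga_\cl$-pure subobjects, as in the proof of Theorem~\ref{th2.2}). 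Granting this, the pseudopullback argument and the infinitary recasting of Step~2 go through unchanged, producing an $L_{\mu_\cl,\mu_\cl}(\Sigma_1)$-theory $T$ and a $\mu_\cl$-accessible reduct functor $R\from\Mod(T)\to\Str(\Sigma)$ whose full image $\RssT$ is equivalent to $P(F)$ and hence closed under subobjects.

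It then remains, for an \strcpct{\mu_\cl} cardinal $\ka$ with $\mu_\cl\shlt\ka$, to show that $\RssT$ is closed under $\ka$-directed colimits in $\Str(\Sigma)$; since $\RssT\cong P(F)$ is closed under subobjects, it suffices to produce a \emph{monomorphism} from the colimit $C$ of a $\ka$-directed diagram $D\from I\to\RssT$ to an object of $\RssT$. Here the argument is actually simpler than in Theorem~\ref{th2.2}, as we need only a monomorphism rather than a $\la$-pure morphism. In version~(i) I would replace the $\la$-pure diagram $T_C$ by the full diagram $\mathrm{Diag}^{\pm}_C$ of atomic and negated atomic $L_{\mu_\cl,\mu_\cl}(\Sigma_C)$-formulas valid in $C$; the $(T\cup\mathrm{Diag}^{\pm}_C)$-models are exactly the monomorphisms $C\to X$ with $X$ in $\RssT$. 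Forming the ultraproduct $\prod_\cu Di$ for a $\mu_\cl$-complete ultrafilter $\cu$ extending the $\ka$-complete filter generated by the sets $\upset i$, the canonical comparison map $C\to\prod_\cu Di$ supplied by the universal property of the colimit is automatically \emph{injective} by $\ka$-directedness (distinct elements of $C$ are separated on a tail $\upset m\in\cu$), so it is the desired monomorphism, while {\L}o\'s's theorem for $L_{\mu_\cl,\mu_\cl}$ (valid by $\mu_\cl$-completeness of $\cu$) ensures $\prod_\cu Di$ is again a $T$-model. In version~(ii) one builds $h\from C\to j(D)(i_0)$ exactly as in Theorem~\ref{th2.2}, using an embedding $j\from V\to M$ from Theorem~\ref{embedcmpct}; since $j\restr C$ is injective and factors as $j(\delta)(i_0)\circ h$, the map $h$ is a monomorphism, which is all that is now required.
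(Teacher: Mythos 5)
Your proposal is correct and follows the same overall strategy as the paper's proof, which is precisely the substitution you describe: rerun the proof of Theorem~\ref{th2.2} with $\Mono(\cl)$ in place of $\Pure_\lambda(\cl)$, the atomic diagram (atomic and negated atomic formulas) in place of the $\la$-pure diagram $T_C$ in the ultraproduct version of Step~3, and, in the elementary-embedding version, the observation that a right factor of a monomorphism is a monomorphism. The one place you genuinely diverge is the point you rightly flag as the crux: the $\mu_\cl$-accessibility of $\Mono(\cl)$. You propose to import this from the monomorphism subcategory of $\Str(\Sigma)=\Set^{\ca^{\op}}$ along $E$, which would cost a further pseudopullback argument with index-tracking. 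The paper gets it essentially for free from what Step~1 of Theorem~\ref{th2.2} already established: $\Pure_\lambda(\cl)$ is $\mu_\cl$-accessible and every $\la$-pure morphism in a $\la$-accessible category is a monomorphism, so every object of $\cl$ is a $\mu_\cl$-directed colimit of ($\la$-pure, hence mono) subobjects that are $\mu_\cl$-presentable; together with closure of $\Mono(\cl)$ under $\la$-directed colimits in $\cl$, this yields $\mu_\cl$-accessibility directly. Your Step~3 matches the paper's in both versions --- your tail argument for injectivity of the comparison map $C\to\prod_\cu Di$ is just an unwinding of the fact that the negated atomic sentences $\neg(c_a=c_b)$ of the atomic diagram hold in the ultraproduct --- so there is no gap, only a slightly more laborious route to the accessibility of $\Mono(\cl)$.
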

\begin{proof}
We proceed as in the proof of \ref{th2.2}, taking the category $\Mono(\cl)$ of $\cl$-objects and monomorphisms instead of $\Pure_\lambda(\cl)$.
This category is closed in $\cl$ under $\lambda$-directed colimits. Since any object of $\cl$ is a $\mu_\cl$-directed colimit of $\lambda$-pure
subobjects $\mu_\cl$-presentable in $\cl$, it is 
certainly a $\mu_\cl$-directed colimit of subobjects $\mu_\cl$-presentable in $\cl$, and so $\Mono(\cl)$ is
$\mu_\cl$-accessible. The functor $E\from \cl\to\Set^{\ca^{\op}}=\Str(\Sigma)$ preserves monomorphisms, and monomorphisms in $\Str(\Sigma)$ are injective
homomorphisms. 
Hence, in the ultraproduct approach to Step 3 of the proof of \ref{th2.2}, 
we may replace $T_C$ by 
the atomic diagram of $C$ consisting of atomic and negated
atomic formulas to obtain the desired result; in the embeddings approach,
we simply use that a right factor of a monomorphism must also be a monomorphism.
\end{proof}

\begin{coro}\label{allmuPowIm}
Suppose for every cardinal $\mu$ there exists an \strcpct{\mu} cardinal.
Then the powerful image of every accessible functor is accessible.
\end{coro}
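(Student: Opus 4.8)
The plan is to deduce this directly from Theorem~\ref{th2.7}, which already handles the powerful image of a single accessible functor once the accessibility data of the functor have been lined up with the large cardinal. The only gap between the hypotheses there and the bare statement here is that Theorem~\ref{th2.7} asks for a specific regular $\lambda$ for which $F$ is $\lambda$-accessible and preserves $\mu_\cl$-presentable objects, together with an \strcpct{\mu_\cl} cardinal; so essentially all of the work is in producing such a $\lambda$ and then invoking the global large cardinal hypothesis at precisely the right cardinal. This is exactly the bookkeeping already sketched in the discussion immediately following Theorem~\ref{th2.2}.

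First I would fix an arbitrary accessible functor $F\from\ck\to\cl$, so that $F$ is $\lambda'$-accessible for some regular $\lambda'$; in particular $\cl$ is an accessible category, as required by Theorem~\ref{th2.7}. Applying the Uniformization Theorem \cite[2.19]{AR}, I obtain a regular cardinal $\lambda\shgt\lambda'$ for which $F$ is $\lambda$-accessible and preserves $\lambda$-presentable objects. With this $\lambda$ now fixed, I form $\mu_\cl$ as in Notation~\ref{not}, so that by construction $\lambda\shleq\gamma_\cl\shlt\mu_\cl$; in particular $\lambda\shlt\mu_\cl$. Since $F$ preserves $\lambda$-presentable objects and $\lambda\shlt\mu_\cl$, Remark~2.20 of \cite{AR} gives that $F$ also preserves $\mu_\cl$-presentable objects, which is the preservation hypothesis demanded by Theorem~\ref{th2.7}.

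It then remains to supply the large cardinal. By assumption there is an \strcpct{\mu} cardinal for every cardinal $\mu$; applying this to the particular value $\mu=\mu_\cl$ produced above yields an \strcpct{\mu_\cl} cardinal. With $\lambda$, the target category $\cl$, and this cardinal all in hand, every hypothesis of Theorem~\ref{th2.7} is satisfied for $F$, and I conclude that the powerful image of $F$ is accessible (and in fact accessibly embedded in $\cl$). As $F$ was an arbitrary accessible functor, this proves the corollary.

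I do not anticipate any serious obstacle, as all of the analytic content has already been absorbed into Theorem~\ref{th2.7}; the corollary is genuinely a matter of matching the unqualified notion of an \emph{accessible} functor to the indexed notion of a functor that is $\lambda$-accessible for the right $\lambda$. The single point that needs care is the \emph{order} of the two choices: I must first extract $\lambda$ from Uniformization, and only then form $\mu_\cl$, so that the large cardinal hypothesis is applied to the concrete value $\mu_\cl$ rather than being presumed uniformly in advance. Because the hypothesis furnishes an \strcpct{\mu} cardinal for \emph{every} $\mu$, this dependence causes no trouble, and the argument closes.
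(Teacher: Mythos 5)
Your proof is correct and follows exactly the paper's own route: apply the Uniformization Theorem to get a suitable $\la$, form $\mu_\cl$, use Remark~2.20 of \cite{AR} to get preservation of $\mu_\cl$-presentable objects, and only then invoke the global hypothesis at $\mu=\mu_\cl$ before citing Theorem~\ref{th2.7}. Your explicit remark about the order of the choices is precisely the one subtlety the paper's one-line proof leaves implicit.
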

\begin{proof}
As noted for Theorem~\ref{th2.2} (with reference to \cite[Uniformization Theorem~2.19 and 
Remark~2.20]{AR}), $\la$ and $\mu_\calL$ as in the statement of 
Theorem~\ref{th2.7} may be found for any accessible functor.
\end{proof}

\begin{coro}\label{allAECtame}
Suppose for every cardinal $\mu$ there exists an \strcpct{\mu} cardinal.
Then every AEC is tame.
\end{coro}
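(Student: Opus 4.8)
The plan is to obtain tameness as a formal consequence of Corollary~\ref{allmuPowIm} together with the reduction of tameness to powerful-image accessibility due to Lieberman and Rosick\'y~\cite{LR}. The key observation is that the implication \emph{``the powerful image of every accessible functor is accessible $\Rightarrow$ every AEC is tame''} is a theorem of ZFC: large cardinals are needed only to secure its hypothesis. Corollary~\ref{allmuPowIm} now supplies that hypothesis from the weaker assumption that every cardinal $\mu$ carries an \strcpct{\mu} cardinal, so the corollary follows immediately by quoting \cite{LR}. The content of a self-contained argument therefore lies entirely in recalling why the \cite{LR} reduction holds; I would present it as follows.

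First I would represent the given AEC $\ck$ as an accessible category. By the representation theorem for AECs, $\ck$ equipped with its strong embeddings is an accessible category in which all morphisms are monomorphisms and directed colimits are computed as unions, and its $\la$-presentable objects are essentially the models of size $<\la$. Next I would attach to $\ck$ the accessible functor $F$ of \cite{LR} whose powerful image controls Galois-type equality: roughly, $F$ is defined on the category of pointed extensions of $\ck$ --- objects being strong embeddings $M\to N$ with a marked element $a\in N$ --- and two such data carry the same Galois type over $M$ exactly when they are identified in passing to the image. The decisive structural fact is that \emph{subobjects correspond to restrictions of types}: forming a subobject of the datum $(M,N,a)$ along a submodel $M_0\prec M$ realises the restricted type $p\restr M_0$. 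Closing the image under subobjects to form the powerful image $P(F)$ therefore builds in all such restrictions, and the accessibility of $P(F)$ says precisely that every type is a directed colimit of its restrictions to small submodels while distinct types already restrict to distinct small types. That is exactly tameness, with the tameness bound governed by the accessibility index of $P(F)$.

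The main obstacle, and the only genuinely delicate point, is bookkeeping of cardinal levels. Since AECs have unbounded L\"owenheim--Skolem numbers, the functor $F$ attached to a given $\ck$ forces a particular $\mu_{\cl}$, and one must invoke the existence of an \strcpct{\mu_{\cl}} cardinal for that value; this is why the ``for every $\mu$'' form of the hypothesis --- equivalently, by Proposition~\ref{almstrcpct}, a proper class of almost strongly compact cardinals --- is exactly what is required, no more and no less. Granting this, the accessibility index of the powerful image determines the degree of tameness, so each AEC comes out $<\!\ka$-tame for a $\ka$ read off from the compact cardinal used; applied to every AEC, this recovers Boney's theorem from the weakened large cardinal assumption.
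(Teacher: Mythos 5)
Your proposal is correct and follows essentially the same route as the paper: the corollary is obtained by quoting the Lieberman--Rosick\'y reduction (their Theorem~5.2 and Corollary~5.3) of tameness to accessibility of powerful images, which Theorem~\ref{th2.7} (equivalently, Corollary~\ref{allmuPowIm}) supplies under the stated hypothesis. Your additional sketch of why the \cite{LR} reduction works, and your remark about matching the \strcpct{\mu_{\cl}} cardinal to the $\mu_{\cl}$ forced by each AEC, are accurate elaborations of material the paper leaves to the citation.
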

\begin{proof}
It was shown in \cite[Theorem~5.2 and Corollary 5.3]{LR} that the accessibility 
of powerful images as proven in Theorem~\ref{th2.7} suffices for tameness of
AECs.  
\end{proof}

Grossberg and VanDieren \cite{GV} showed that assuming amalgamation, joint 
embedding and no maximal models, tameness implies the Shelah Categoricity
Conjecture in successor cardinals for AECs.  
This conjecture was a significant test question for the appropriateness of AECs 
as a framework for generalising first order model theory, 
and indeed the conjecture for arbitrary cardinals
remains an important open question.
Boney's result \cite{Bo}, that if there is a proper class of strongly
compact cardinals then every AEC is tame, was thus a significant breakthrough;
Corollary~\ref{allAECtame} improves upon this important result by
reducing the large cardinal assumption used to prove it.
Moreover, Corollary~\ref{allAECtame} is optimal in this regard:
in a forthcoming paper~\cite{BU} 
(drawing ideas from earlier work of Shelah \cite{S}), 
Boney and Unger show 
that the tameness of all AECs implies that there
is a proper class of almost strongly compact cardinals. 
Thus, with Proposition~\ref{almstrcpct} one has an equivalence between
this large cardinal property, tameness of all AECs, and the accessibility of
powerful images of accessible functors.
Furthermore, Boney and Unger have shown that the extra conditions 
of amalgamation, joint embedding and no maximal models follow from 
this large cardinal axiom, so one indeed has from this assumption 
the Shelah Categoricity Conjecture in successor cardinals for AECs.


\begin{thebibliography}{AHRT1}
\itemsep=2pt

\bibitem{AR} J. Ad\'{a}mek and J. Rosick\'{y}, {\em Locally Presentable and Accessible Categories}, Cambridge University
Press 1994.

\bibitem{A} A. Apter.  {\em Measurability and degrees of strong compactness},
J. Symb. Logic 46 no.~2 (1981), 249--254.

\bibitem{BB} J. Bagaria and A. Brooke-Taylor, {\em On colimits and elementary embeddings}, J. Symb. Logic 78 (2013), 562-578.

\bibitem{BCMR} J. Bagaria and C. Casacuberta and A. Mathias and J. Rosick\'y, 
{\em Definable orthogonality classes in accessible categories are small},
J. European Math. Soc., 17 (2015) no. 3, 549--589.

\bibitem{BM} J. Bagaria and M. Magidor,
	{\em Group radicals and strongly compact cardinals},
	Trans. Amer. Math. Soc.
	366,
	(2014),
	1857--1877.

\bibitem{BM14} J. Bagaria and M. Magidor, {\em On $\omega_1$-strongly compact
cardinals}, J. Symb. Logic 79 no.~1 (2014), 266--278.

\bibitem{Bo} W. Boney, Tameness from large cardinal axioms, J. Symb. Logic  79 (2014), 1092-1119.  

\bibitem{BU} W. Boney and S. Unger.  Large cardinal axioms from tameness in
AECs. Preprint.  arXiv:1509.01191.
 
\bibitem{CR} B. Chorny and J. Rosick\'{y}, {\em Class-locally presentable and class-accessible categories}, J. Pure Appl. Algebra 216 (2012),
2113-2125.

\bibitem{EA} K. Eda and Y. Abe, {\em Compact cardinals and abelian groups},
Tsukuba J. Math.~11 no.~2 (1987), 353--360.
 
\bibitem{EM} P. C. Eklof and A. M. Mekler, {\em Almost Free Modules, Set-Theoretic Methods}, North-Holland 1990.

\bibitem{GV} R. Grossberg and M. VanDieren, {\em Categoricity from one successor cardinal in tame abstract elementary classes}, Logic
and Algebra (Yi Zhang, ed.), vol. 302, AMS 2002. 

\bibitem{K} A. Kanamori. {\em The Higher Infinite}, 2nd edition, Springer 2003.

\bibitem{LR} M. Lieberman and J. Rosick\'{y}, Classification theory for accessible categories, to appear in J. Symb. Logic, arXiv:1404.2528.

\bibitem{MP} M. Makkai and R. Par\' e, {\em Accessible categories: The foundation of Categorical Model Theory},
Cont. Math. 104, AMS 1989. 

\bibitem{M} T. Menas. {\em On strong compactness and supercompactness},
Ann. Math. Logic 7 (1974), 327--359.

\bibitem{P} M. Prest. {\em Purity, Spectra and Localisation},
Cambridge University Press, 2009.

\bibitem{RR} G. Raptis and J. Rosick\'{y}, {\em The accessibility rank of weak equivalences}, Th. Appl. Categ. 30 (2015), 687-703.   

\bibitem{R} J. Rosick\'{y}, {\em More on directed colimits of models}, Appl. Categ. Struct. 2 (1994), 71-76.  

\bibitem{R1} J. Rosick\'{y}, {\rm On combinatorial model categories}, Appl. Categ. Struct. 17 (2009), 303-316.

\bibitem{S} S. Shelah, {\em Maximal failures of sequence locality in a.e.c.}, arXiv:0903.3614v3.

\bibitem{SM} S. Shelah and M. Makkai, {\em Categoricity of theories in $L_{\kappa,\omega}$ with $\kappa$ a compact cardinal}, Ann. Pure Appl. Logic 47 (1990), 
41-97.

\end{thebibliography}
\end{document}